\newtheorem{Thm}{Theorem}[section]
\newtheorem{theorem}[Thm]{Theorem}
\newtheorem{lemma}[Thm]{Lemma}
\newtheorem{corollary}[Thm]{Corollary}
\newcommand{\vep}{\varepsilon}
\renewcommand{\phi}{\varphi}
\newcommand{\D}{\mathbb{D}}
\newcommand{\T}{\mathbb{T}}
\newcommand{\tz}{\mathbb{T}}
\newcommand{\rea}{{\rm Re}\,}
\title{On hypercyclic rank one perturbations of unitary operators}
\author{Anton Baranov, Vladimir Kapustin, Andrei Lishanskii}
\begin{document}
\sloppy


\address{ 
 Anton Baranov, 
\newline
Department of Mathematics and Mechanics,
St. Petersburg State University, 
\newline
St. Petersburg, Russia,
\newline {\tt anton.d.baranov@gmail.com}
\newline\newline \phantom{x}\,\, Vladimir Kapustin,
\newline 
St. Petersburg Department of V.A. Steklov Mathematical Institute
\newline 
St. Petersburg, Russia,
\newline {\tt kapustin@pdmi.ras.ru}
\newline\newline \phantom{x}\,\, Andrei Lishanskii,
\newline 
Chebyshev Laboratory,
St. Petersburg State University,
\newline 
St. Petersburg, Russia,
\newline {\tt lishanskiyaa@gmail.com}
\newline\newline \phantom{x}
}
\thanks{The results of Section 2
were obtained with the support of the RFBR grant 16-01-00635. 
The results of Section 3 were 
obtained with the support of the Russian Science Foundation grant 
14-21-00035.}

\begin{abstract}
Recently, S. Grivaux showed that there exists 
a rank one perturbation 
of a unitary operator in a Hilbert space which is hypercyclic.
Another construction was suggested later by the first and the third 
authors. Here,
using a functional model for rank one perturbations of singular unitary 
operators, we give yet another construction of hypercyclic 
rank one perturbation of a unitary operator. In particular, we show that
any Carleson set on the circle can be the spectrum of a perturbed 
(hypercyclic) operator.

\end{abstract}

\keywords{hypercyclic operator, rank one perturbation, inner function, model space, 
functional model}

\subjclass{47A16, 30A76, 30H10}
\maketitle

\section{introduction}

A continuous linear operator $T$ in a Banach (or Fr\'echet) space $F$
is said to be {\it hypercyclic} if there exists a vector
$f\in F$ such that its orbit $\{T^n f\}_{n=0}^\infty$
is dense in $F$. In this case the vector $f$ is said
to be {\it hypercyclic for} $T$. Though the property to have a vector with 
a dense orbit may look rather exotic and ``pathological'', it turns out that many
natural operators on spaces of analytic functions possess it (e.g., 
some classes of Toeplitz and composition operators, pseudodifferential operators 
on the space of all entire functions, weighted shifts, etc.). 
The theory of hypercyclic operators and, 
more generally, dynamics of linear operators, were an active
fields of research during the last three decades. For a comprehensive
account of the theory we refer to the recent monographs \cite{Bay-Mat, ge-per}.

One of the natural questions about hypercyclicity is its stability 
(or instability) with respect 
to small (in some sense) perturbations. In 1991 K.C.~Chan and J.H.~Shapiro \cite{chsh}
showed that there exist hypercyclic operators in a Hilbert space
of the form $I+K$, where $I$ is the identity operator and the compact operator 
$K$ belongs to any Schatten class. It is clear that $I+R$ can not be hypercyclic when
$R$ is a finite rank operator (since in this case $I+R^*$
has an eigenvalue). But if we replace $I$ by a unitary operator, 
hypercyclicity is possible. In 2010 S. Shkarin \cite{shk} produced an example of a unitary operator
$U$ such that $U+R$ is hypercyclic for some rank two operator $R$. Shkarin then asked
whether  $R$ can be taken to be of rank one. A positive answer was soon given by 
S. Grivaux \cite{gr1}:

\begin{theorem}[S. Grivaux, \cite{gr1}]
\label{main1}
There exists a unitary operator $U$ in the space $\ell^2$ 
and a rank one operator $R$ such that $U+R$ is hypercyclic.
\end{theorem}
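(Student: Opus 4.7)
The plan is to invoke the Godefroy--Shapiro criterion: a bounded operator $T$ on a separable complex Hilbert space is hypercyclic provided the closed linear spans of $\bigcup_{|\lambda|<1}\ker(T-\lambda I)$ and of $\bigcup_{|\lambda|>1}\ker(T-\lambda I)$ are both equal to the whole space. Hence it suffices to produce a unitary $U$ and a rank one $R$ so that the eigenspaces of $U+R$ associated to eigenvalues strictly inside and strictly outside the unit disk together span dense subspaces.

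I would model $U$ as multiplication by the independent variable on $L^2(\mu)$, where $\mu$ is a purely singular probability measure on $\mathbb{T}$, and take $R$ of the form $Rf=\langle f,g\rangle h$ with $h,g\in L^2(\mu)$ to be chosen. A direct rank one computation then shows that $\lambda\notin\operatorname{supp}\mu$ is an eigenvalue of $U+R$ precisely when the Cauchy-type integral
\[
F(\lambda):=\int_{\mathbb{T}}\frac{h(\zeta)\overline{g(\zeta)}}{\lambda-\zeta}\,d\mu(\zeta)
\]
equals $1$, in which case the eigenvector is (a multiple of) the kernel $k_\lambda(\zeta)=(\lambda-\zeta)^{-1}\in L^2(\mu)$. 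The whole problem thus reduces to controlling the level set $\{F=1\}$ together with the completeness of the family $\{k_\lambda\}$ along it.

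Next I would arrange for $\mu$ to be supported on a sufficiently thin closed set $E\subset\mathbb{T}$ so that the associated singular inner function $\Theta$ (whose Clark measure is $\mu$) extends analytically across $\mathbb{T}\setminus E$. In the functional model $K_\Theta=H^2\ominus\Theta H^2$, the data $h,g$ correspond to vectors of the model space, and $F$ extends to a meromorphic function on $\mathbb{C}\setminus E$. I would then tune these vectors so that the equation $F(\lambda)=1$ admits countably many solutions in $\{|\lambda|>1\}$ which accumulate nontangentially at every point of $\mathbb{T}\setminus E$, and symmetrically in $\{|\lambda|<1\}$. Producing such an $F$ is essentially an interpolation problem on $K_\Theta$ and should be manageable by a careful choice of symbols.

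The main obstacle will be the completeness step: one must prove that each of the two families $\{k_{\lambda_n}\}_{|\lambda_n|<1}$ and $\{k_{\lambda_n}\}_{|\lambda_n|>1}$ is total in $L^2(\mu)$. By duality this is a uniqueness theorem for Cauchy integrals of singular measures supported on $E$: any $\varphi\in L^2(\mu)$ orthogonal to the kernels gives rise to an analytic function $\int\varphi(\zeta)(\lambda-\zeta)^{-1}d\mu(\zeta)$ vanishing on the prescribed sets of eigenvalues, and one must deduce $\varphi=0$. It is precisely here that the thinness of $E$ and the nontangential density of $\{\lambda_n\}$ at $\mathbb{T}\setminus E$ enter decisively, via analytic continuation across $\mathbb{T}\setminus E$ and the triviality of the pseudocontinuation classes of Cauchy integrals of singular measures carried by such sets. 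Verifying this rigidity inside the functional model is the technical heart of the argument.
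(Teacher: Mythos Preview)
Your plan contains a concrete mistake that undermines the scheme as written. You place the spectral measure $\mu$ on a thin closed set $E\subset\mathbb{T}$, so that the Cauchy transform $F(\lambda)=\int_{\mathbb{T}} h\bar g\,(\lambda-\zeta)^{-1}\,d\mu(\zeta)$ extends analytically to all of $\mathbb{C}\setminus E$. But then $F-1$ is analytic on the connected open set $\mathbb{C}\setminus E$, and its zeros (your eigenvalues) cannot accumulate at any point of $\mathbb{T}\setminus E$: if they did, $F\equiv 1$ by the identity theorem, which is impossible since $F(\lambda)\to 0$ as $\lambda\to\infty$. So the eigenvalues off the circle can accumulate only on $E$. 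This kills the completeness argument you sketch, which relied precisely on accumulation at points of $\mathbb{T}\setminus E$ to force the orthogonal Cauchy integral to vanish by analytic continuation there. You are left needing a genuine uniqueness theorem: a Blaschke sequence $\{\lambda_n\}\subset\mathbb{D}$ clustering only on $E$ such that every Cauchy transform of an $L^2(\mu)$ function vanishing on $\{\lambda_n\}$ is identically zero (and the analogous statement outside). That is a completeness-of-kernels problem in the model space $K_\Theta$ and is not automatic; you also still owe the interpolation step producing such $\{\lambda_n\}$ as level sets of $F$. Neither step is addressed.

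By contrast, the paper avoids Godefroy--Shapiro entirely and uses Grivaux's criterion (Theorem~\ref{main2}), which asks only for \emph{unimodular} eigenvalues whose eigenvectors span and vary continuously. In the space $H_*(\phi,\phi)$ with $\phi\in C^1(\overline{\mathbb{D}})$ vanishing exactly on a perfect Carleson set $E$, the backward shift $S^*$ restricted to $H_0=\overline{\mathrm{Span}}\{(z-\lambda_j)^{-1}\}$ has the Cauchy kernels $(z-\lambda_j)^{-1}$ as eigenvectors with eigenvalues $\bar\lambda_j\in\mathbb{T}$; completeness is the definition of $H_0$, and the continuity property is a one-line dominated convergence argument using $\phi\in C^1$. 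A short lemma then shows $S^*|_{H_0}$ differs from a unitary by rank one. The point is that putting the eigenvalues \emph{on} the circle makes both the spanning and the ``closeness of eigenvectors'' trivial, whereas pushing them off the circle, as you propose, reintroduces exactly the hard completeness question that Grivaux's criterion was designed to bypass.
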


In 2015 A.~Baranov and A.~Lishanskii \cite{BarL} gave another proof of this 
theorem using a functional model of rank one perturbations of unitary operators due to 
V.~Kapustin \cite{kap} and A.D.~Baranov and D.V.~Yakubovich \cite{bar-yak}. 
The constructions of \cite{gr1}
and \cite{BarL} were both based on the following interesting theorem 
(also due to S. Grivaux) which gives a condition sufficient for hypercyclicity 
in the case when the operator has sufficiently many 
eigenvectors corresponding to unimodular eigenvalues
with a  certain ``continuity property''.

\begin{theorem}[S. Grivaux, \cite{gr2}]
\label{main2}
Let $X$ be a complex separable infinite-dimensional Banach space, and
let $T$ be a bounded operator on $X$. Suppose that there exists a sequence 
$\{u_n\}_{n\ge 1}$ of vectors in $X$ having the following properties\textup:
\smallskip

{\rm (i)} $u_n$ is an eigenvector of $T$ associated to an eigenvalue $\lambda_n$ 
of $T$, with $|\lambda_n| = 1$ and $\lambda_n$ are all distinct\textup;
\smallskip

{\rm (ii)} ${\rm span} \{u_n: n\ge 1\}$ is dense in $X$\textup;
\smallskip
 
{\rm (iii)} for any $n \ge 1$ and any $\vep>0$, there exists 
$m\ne n$ such that $\|u_n -u_m\|<\vep$.
\smallskip
\\
Then $T$ is hypercyclic. 
\end{theorem}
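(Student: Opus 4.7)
The plan is to establish topological transitivity of $T$: for every pair of nonempty open subsets $U,V\subset X$ there exists $n\ge 1$ with $T^n(U)\cap V\neq\emptyset$. Since $X$ is a separable Banach space, Birkhoff's transitivity theorem then yields a dense $G_\delta$ set of hypercyclic vectors, finishing the proof. By hypothesis (ii), it is enough to verify transitivity when the centers of $U$ and $V$ are finite linear combinations of the $u_n$, so I would fix
\[
u=\sum_{k=1}^N \alpha_k u_{n_k},\qquad v=\sum_{k=1}^N \beta_k u_{n_k}
\]
(the common index set $\{n_1,\dots,n_N\}$ being obtained by padding with zeros) and $\varepsilon>0$, and seek $x$ and $n\ge 1$ with $\|x-u\|<\varepsilon$ and $\|T^n x-v\|<\varepsilon$.

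The construction is a cancellation trick. Pick any $n\ge 1$ and, for each $k$, use (iii) together with the distinctness of eigenvalues to produce $m_k$ with $\|u_{m_k}-u_{n_k}\|$ as small as desired and simultaneously $|\lambda_{m_k}^n-\lambda_{n_k}^n|\ge 1$. The second requirement is achievable because (iii) supplies infinitely many indices $m$ for which $u_m$ is arbitrarily close to $u_{n_k}$; all their $\lambda_m$ are distinct, hence so are their $n$-th powers, and only finitely many points of $\T$ can lie in the short arc $\{\zeta\in\T:|\zeta-\lambda_{n_k}^n|<1\}$. With such $m_k$ in hand, set
\[
c_k:=\frac{\beta_k-\alpha_k\lambda_{n_k}^n}{\lambda_{m_k}^n-\lambda_{n_k}^n},\qquad x:=u+\sum_{k=1}^N c_k(u_{m_k}-u_{n_k}).
\]
The denominator bound yields $|c_k|\le|\alpha_k|+|\beta_k|$, so choosing the approximating vectors close enough makes $\|x-u\|\le\sum_k|c_k|\,\|u_{m_k}-u_{n_k}\|$ as small as we please. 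Expanding $T^n x$ using $T u_{n_k}=\lambda_{n_k}u_{n_k}$ and $T u_{m_k}=\lambda_{m_k}u_{m_k}$, the defining relation for $c_k$ forces the coefficient of each $u_{n_k}$ to collapse to exactly $\beta_k$, leaving
\[
T^n x-v=\sum_{k=1}^N c_k\lambda_{m_k}^n(u_{m_k}-u_{n_k}),
\]
whose norm is again dominated by $\sum_k|c_k|\,\|u_{m_k}-u_{n_k}\|$ since $|\lambda_{m_k}|=1$. Both quantities are therefore below $\varepsilon$ once $u_{m_k}$ is chosen close enough to $u_{n_k}$.

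The main obstacle is reconciling two competing demands on the indices $m_k$: hypothesis (iii) lets us drive $u_{m_k}$ as close to $u_{n_k}$ as we wish, but we simultaneously need $\lambda_{m_k}^n-\lambda_{n_k}^n$ to stay bounded away from zero, otherwise the corrective coefficients $c_k$ blow up and destroy the estimate on $\|x-u\|$. The observation that unlocks the argument is that these demands are compatible because (iii) provides infinitely many admissible $m$ whose eigenvalues $\lambda_m$ are all distinct, so their $n$-th powers cannot accumulate at the single point $\lambda_{n_k}^n$. Note in addition that $n$ can be chosen arbitrarily large, which in fact upgrades the conclusion to topological mixing; once this geometric fact on $\T$ is in hand, the rest of the proof is bookkeeping.
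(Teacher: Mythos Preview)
The paper does not prove Theorem~\ref{main2}; it quotes the result from Grivaux \cite{gr2} and uses it as a black box. So there is no proof here to compare your attempt against, and your argument must stand on its own.

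It does not. The central step --- producing indices $m_k$ with $\|u_{m_k}-u_{n_k}\|$ small and simultaneously $|\lambda_{m_k}^{\,n}-\lambda_{n_k}^{\,n}|\ge 1$ for a \emph{pre-chosen} power $n$ --- is unjustified, and the justification you offer contains false statements. Distinct unimodular numbers need not have distinct $n$-th powers (take $\pm 1$ with $n=2$), and the arc $\{\zeta\in\T:|\zeta-\lambda_{n_k}^{\,n}|<1\}$ is an open arc of $\T$ and so contains uncountably many points, not finitely many. More to the point, hypotheses (i)--(iii) are entirely compatible with the situation in which \emph{every} eigenvalue $\lambda_m$ with $u_m$ close to $u_{n_k}$ lies in an arbitrarily short arc around $\lambda_{n_k}$; in that case, for the $n$ you have already fixed, every $\lambda_m^{\,n}$ lies in a short arc around $\lambda_{n_k}^{\,n}$, and no candidate $m$ satisfies your lower bound.

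Reversing the order of choices --- first pick $m_1,\dots,m_N$, then choose $n$ --- does not immediately rescue the argument either. An $n$ with all $|\lambda_{m_k}^{\,n}-\lambda_{n_k}^{\,n}|\ge\delta$ certainly exists for some $\delta>0$, but this $\delta$ a priori depends on the particular ratios $\lambda_{m_k}/\lambda_{n_k}$, hence on the $m_k$. Since you need $\|u_{m_k}-u_{n_k}\|$ small relative to $\delta$ in order to control $|c_k|\,\|u_{m_k}-u_{n_k}\|$, while the $m_k$ must be chosen \emph{before} $\delta$ is known, the construction is circular. Closing the circle would require a uniform lemma of the type ``for every $\zeta_1,\dots,\zeta_N\in\T\setminus\{1\}$ there exists $n\ge1$ with $\min_k|\zeta_k^{\,n}-1|\ge c_N$, where $c_N>0$ depends only on $N$'', which is neither obvious nor supplied. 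Grivaux's original proof in \cite{gr2} proceeds along a different and more delicate route.
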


Both in \cite{gr1} and \cite{BarL} the main technical 
difficulty was to prove the continuity property
(iii). In this note we give yet another construction of hypercyclic 
rank one perturbation of a unitary operator. 
This construction is related to a class of functional spaces studied 
in \cite{kap, kap2, mak}. One of the advantages of this approach 
is that the continuity property of eigenvectors
becomes almost automatic. Another (and apparently more important)
novelty is that now we have a lot of freedom in choosing the spectrum
of the perturbed (hypercyclic) operator (see Theorem \ref{carl1} below). 
\bigskip

\section{Carleson sets and spaces of analytic functions}

Let $\D =\{z\in\mathbb{C}: |z|<1\}$ be the unit disk and 
$\T =\{z\in\mathbb{C}: |z|=1\}$ be the unit circle.  
We denote by $H^2$ the standard Hardy space in $\mathbb{D}$ 
(identifying it with subspace of $L^2(\T)$ via its boundary values)
and we put $H^2_- = L^2(\T) \ominus H^2 = \overline{z H^2}$.
We denote by $S$ and $S^*$ the shift and backward shift operators 
(considered on the Hardy space or on the related spaces introduced below), 
$$
Sf(z)  = zf(z), \qquad S^*f(z) = \frac{f(z) - f(0)}{z}.
$$

Let $E$ be a closed subset of $\T$ with zero Lebesgue
measure and let $\{I_j\}$ be at most countable set of disjoint open arcs
$I_j \subset \T$ such that $\T\setminus E= \bigcup_j I_j$. The set $E$
is said to be a {\it Carleson set} (or a {\it set of finite entropy}) if 
$$
\sum_j |I_j| \log \frac{1}{|I_j|} <\infty
$$
(by $|I|$ we always denote the normalized Lebesgue measure of $I\subset \T$).
By the classical results of L.~Carleson \cite{carl}, sets of finite entropy 
are precisely those sets that may serve as zero sets of smooth 
up to the boundary) analytic functions in $\D$. Namely, a nonzero
function $f \in \mathit{Hol}(\D) \cap C^1(\overline{\D})$ 
(or $f \in \mathit{Hol}(\D) \cap C^\infty(\overline{\D})$)
vanishing on $E\subset \T$ exists if and only if $E$ is a Carleson set.

Recall that a set $E$ is said to be {\it perfect} if it is closed and has no 
isolated points. One of the main results of this note is the following theorem:

\begin{theorem}
\label{carl1}
For any perfect Carleson set $E$ on the unit circle $\mathbb{T}$
there exists a hypercyclic operator of the form $U+R$, where 
$U$ is unitary and $R$ of rank one, such that
$$
\sigma(U+R) = \sigma_p(U+R) = E.
$$
\end{theorem}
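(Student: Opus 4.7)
The plan is to realize $U+R$ through the functional model for rank one perturbations of singular unitary operators of \cite{kap, bar-yak}, and then to verify Grivaux's criterion (Theorem \ref{main2}). Because $E$ is a perfect Carleson set, one can choose a continuous positive singular measure $\mu$ whose closed support equals $E$. The associated singular inner function
$$
\theta(z) = \exp\left( -\int_{\T} \frac{\zeta + z}{\zeta - z}\, d\mu(\zeta) \right)
$$
has spectrum equal to $\operatorname{supp}\mu = E$, and the Carleson condition on $E$ forces $\theta$ to extend smoothly to $\T\setminus E$. I would work on the model space $K_\theta = H^2\ominus\theta H^2$ (or one of the related spaces studied in \cite{kap, kap2, mak}), on which the candidate rank one perturbation of the model singular unitary can be written down explicitly.

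In this setup the eigenvalue equation for $U+R$ at a point $\lambda\in\T$ reduces to an explicit boundary identity in $\theta$ and the data defining $R$, and the corresponding eigenvector $u_\lambda$ is given by a reproducing-kernel-type formula depending continuously on $\lambda$. The perturbation $R$ is to be chosen so that each $\lambda\in E$ is a unimodular eigenvalue of $U+R$ with normalized eigenvector $u_\lambda$. Now fix a countable dense sequence $\{\lambda_n\}\subset E$ (available because $E$ is perfect) and set $u_n=u_{\lambda_n}$. Condition (i) of Theorem \ref{main2} is immediate, and condition (iii) is the ``almost automatic'' continuity advertised in the introduction: perfectness of $E$ supplies $\lambda_m$ arbitrarily close to any $\lambda_n$, and continuity of $\lambda\mapsto u_\lambda$ then delivers $\|u_n-u_m\|<\vep$. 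Condition (ii) will follow from a totality result for the boundary reproducing kernels of $K_\theta$, using that $\{\lambda_n\}$ is dense in $E=\operatorname{supp}\mu$ and that $\mu$ has no atoms.

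For the spectral identity, the functional model expresses the spectrum of $U+R$ outside $\sigma(U)=E$ as the zero set of an explicit analytic function built from $\theta$ and the data of $R$; one arranges this function to have no zeros outside $E$, so that $\sigma(U+R)\subseteq E$, while $E\subseteq \sigma_p(U+R)$ by the eigenvalue construction. Combined, these give $\sigma(U+R)=\sigma_p(U+R)=E$. The main obstacle, and the real content of the proof, is to engineer $R$ so that simultaneously every $\lambda\in E$ is an eigenvalue, the map $\lambda\mapsto u_\lambda$ is continuous with total range, and no additional spectrum appears outside $E$. Reconciling these demands is what requires the specific functional model of \cite{kap, kap2, mak} and exploits the Carleson condition on $E$; by comparison the verification of Grivaux's three conditions is then rather soft.
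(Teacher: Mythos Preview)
Your outline has the right overall architecture (a functional model plus Grivaux's criterion), but the concrete model you choose is not the one the paper uses, and as stated it has a gap.

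The paper does \emph{not} start from a singular inner function with spectrum $E$. Instead it invokes Carleson's theorem on boundary zero sets to pick $\phi\in \mathit{Hol}(\D)\cap C^1(\overline{\D})$ with $\phi(0)\ne 0$ and $\{z\in\T:\phi(z)=0\}=E$, and works in the space $H_*=H_*(\phi,\phi)=\{f:\phi f\in H^2,\ \bar\phi f\in H^2_-\}$. For $\lambda\in E$ the Cauchy kernel $(z-\lambda)^{-1}$ lies in $H_*$ because $\phi/(z-\lambda)\in H^2$, and the $C^1$ bound $\sup_{\lambda\in E,\,z\in\T}|\phi(z)/(z-\lambda)|<\infty$ gives continuity of $\lambda\mapsto(z-\lambda)^{-1}$ in $H_*$ by dominated convergence. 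One then takes a countable dense $\{\lambda_j\}\subset E$ with no isolated points, sets $H_0$ equal to the closed span of the $(z-\lambda_j)^{-1}$, and considers $T=S^*|_{H_0}$. Conditions (i) and (iii) of Theorem~\ref{main2} are as you describe, while (ii) holds \emph{by definition} of $H_0$; no separate totality result is needed. The unitary $U$ is built a posteriori: $S^*$ is an isometry of the codimension-one subspace $H_1=\{f\in H_0:f(0)=0\}$ onto another codimension-one subspace (Lemma~\ref{lem2}), and one completes it to a unitary arbitrarily on the one-dimensional complement. The spectral identity comes from a direct resolvent computation using the analytic continuation Lemma~\ref{lem1}.

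The gap in your route is at the eigenvector step. For a singular inner function $\theta$ with spectrum $E$, a point $\lambda\in E$ gives a boundary reproducing kernel in $K_\theta$ (equivalently, an eigenvector of the compressed shift or its rank-one perturbations) only when $\theta$ has a finite angular derivative at $\lambda$, i.e.\ when $\int_\T|\xi-\lambda|^{-2}\,d\mu(\xi)<\infty$. For a continuous singular $\mu$ with support $E$ this fails at $\mu$-a.e.\ point of $E$, so you cannot expect an eigenvector $u_\lambda$ at every, or even at most, $\lambda\in E$; your sentence ``$R$ is to be chosen so that each $\lambda\in E$ is a unimodular eigenvalue'' hides precisely the construction that needs to be carried out. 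Relatedly, your invocation of the Carleson hypothesis (``the Carleson condition on $E$ forces $\theta$ to extend smoothly to $\T\setminus E$'') is misplaced: any inner function extends analytically across the complement of its spectrum, Carleson or not. In the paper the Carleson condition is used exactly once, and crucially: it is what guarantees the existence of the smooth $\phi$ vanishing on $E$, and that smoothness is what makes both the membership $(z-\lambda)^{-1}\in H_*$ and the continuity (iii) effortless.
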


Here $\sigma(T)$ and $\sigma_p(T)$ denote the spectrum and 
the point spectrum of the operator $T$, respectively.

Appearance of Carleson sets in this context is not random. 
It is interesting to compare Theorem \ref{carl1} with the following remarkable 
result of N.G.~Makarov \cite{mak}: a set $F\subset \mathbb{T}$ is a point spectrum of some 
operator of the form $U+K$ where $U$ is unitary and $K$ is of trace class if and only if
$F$ is a countable union of Carleson sets. We do not know for which $F$ 
it is possible to find a hypercyclic operator $U+R$ as above with point spectrum $F$
(and, in particular, whether there exist countable unions of Carleson sets 
for which this is not true). 

Another interesting problem is to describe those unitary operators $U$ for which there 
exists a hypercyclic rank one perturbation $U+R$. This seems to be a difficult problem.
Our results imply that the essential spectrum of such operator $U$ can be any 
perfect Carleson set.
\medskip

\subsection{Construction of the space $H_*(\phi, \psi)$.}
The following class of spaces of analytic functions 
was introduced in \cite{kap} in connection with a model for rank 
one perturbations of unitary operators. 
Let $\phi$ and $\psi$ be the functions in $H^2 (\T)$ with the properties
$$
|\phi| = |\psi|\quad \text{a.e. on}\ \T \qquad\text{and} \qquad 
\psi(0) \neq 0.
$$
Let $H_*=H_*(\phi, \psi)$ be the space of functions defined by 
$$
H_* = \frac{H^2}{\phi} \cap \frac{H^2_-}{\bar \psi} = 
\{f: f\phi \in H^2, f\bar\psi \in H^2_-\}
$$
with the norm
$$
\|f\|_{H_*} := \|f \phi\|_{H^2} = \|f \bar\psi\|_{H^2_-}.
$$
It is clear that the space $H_*$ is invariant with respect to $S^*$. It is also shown 
in \cite{kap} that the inclusion $f\in H^2_-/\overline{\psi}$ and the fact 
that $\psi(0)  \ne 0$ imply that we can consider the values of $f\in H_*$ 
outside the unit disk and that there exists a finite limit 
$$
(zf)_\infty  = \lim_{z\to\infty} zf(z).
$$                                  

Assume now that $E\subset\mathbb{T}$
is a perfect Carleson set. 
Let $\phi \in C^1(\overline{\D})$ be such that $\phi(0) \ne 0$ and  
$\{z\in\mathbb{T}:\, \phi(z) = 0\} = E$. Then it is clear that
any function of the form $(z-\lambda)^{-1}$, $\lambda\in E$, belongs to the space 
$H_* = H_*(\phi, \phi)$ associated with the pair $(\phi, \phi)$. Indeed, 
$$
\frac{\phi}{z-\lambda} \in H^2, \qquad 
\frac{\overline{\phi}}{z-\lambda} = \frac{1}{\lambda}
\overline{\bigg(\frac{z\phi}{\lambda-z}\bigg)} \in H^2_-.
$$

We will use the fact that the functions in the space $H_*$ have analytic continuation
through $\mathbb{T} \setminus E$.

\begin{lemma}
\label{lem1}
Let $\mu\in \mathbb{T} \setminus E$. Then any function in $H_*$ 
admits analytic continuation into a neighborhood of $\mu$ 
and the mapping $f\mapsto f(\mu)$ 
is a bounded functional on $H_*$. 
\end{lemma}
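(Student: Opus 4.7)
The plan is to build the analytic continuation of $f\in H_*$ across $\T\setminus E$ piece by piece using the two descriptions of $H_*$, and then to glue the pieces by matching their boundary values on the circle. Since $\mu\notin E$ and $\phi$ is continuous on $\overline{\D}$, we have $\phi(\mu)\neq 0$, so one can fix a small open disk $V\subset\co$ centered at $\mu$ with $V\cap E=\emptyset$ on which $\phi$ is bounded away from zero in $V\cap\overline{\D}$. Put $\phi^*(z):=\overline{\phi(1/\bar z)}$ for $|z|>1$; this is analytic outside $\overline{\D}$, equals $\bar\phi$ on $\T$, and, shrinking $V$ if necessary, is bounded away from zero on $V\cap\{|z|\geq 1\}$.

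For $f\in H_*$, set $g:=f\phi\in H^2$ and let $\tilde h$ denote the analytic extension of $h:=f\bar\phi\in H^2_-$ to $\{|z|>1\}$ (vanishing at infinity). Define
$$
F(z)=\begin{cases} g(z)/\phi(z), & z\in V\cap\D, \\ \tilde h(z)/\phi^*(z), & z\in V\cap\{|z|>1\}. \end{cases}
$$
Each piece is analytic on its half of $V$ and lies in $L^2$ up to $\T$, and the nontangential boundary values from the two sides coincide almost everywhere on $V\cap\T$: from inside $g/\phi=f$ and from outside $\tilde h/\phi^*=h/\bar\phi=f$.

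The main step, and the only genuine difficulty, is to show that $F$ extends analytically across $V\cap\T$. I would prove this by verifying that $\bar\partial F=0$ in the distribution sense on $V$ and invoking Weyl's lemma. For any test function $\psi\in C_c^\infty(V)$, Stokes' formula applied separately on $V\cap\D$ and on $V\cap\{|z|>1\}$, where $F$ is classically analytic, reduces $\int_V F\,\bar\partial\psi\,dA$ to boundary integrals along $V\cap\T$ of the one-sided boundary values of $F$ against $\psi$, with opposite orientations from the two sides. Since these boundary values agree a.e.\ on $V\cap\T$, the line integrals cancel, giving the distributional identity $\bar\partial F=0$.

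Finally, for the boundedness of $f\mapsto f(\mu)$, apply the mean value property to the analytic extension of $F$ on a small disk $\{|z-\mu|<r\}\subset V$, split the integral into its parts inside and outside $\D$, and estimate using the positive lower bound on $|\phi|$ and $|\phi^*|$ on $V$ together with the standard inequality $\int_\D|g|^2\,dA\leq\|g\|_{H^2}^2$ and its analogue for $\tilde h$. This gives $|f(\mu)|=|F(\mu)|\leq C\|f\|_{H_*}$ with a constant depending only on $\mu$ and $\phi$.
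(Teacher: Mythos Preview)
Your argument is correct and follows essentially the same route as the paper: build the analytic extension outside $\D$ from the condition $f\bar\phi\in H^2_-$, match nontangential boundary values across the arc where $\phi\neq 0$, and bound $|f(\mu)|$ by the (sub)mean-value inequality over a small disk centered at $\mu$. The only cosmetic difference is that the paper asserts the gluing step directly from the local $L^2$ control on circles approaching the arc from both sides, whereas you make it explicit via $\bar\partial F=0$ in the sense of distributions and Weyl's lemma; both are standard ways to justify the same analytic continuation.
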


\begin{proof}
By definition, $f\in H_*$ if and only if $\phi f\in H^2$ and 
$\phi \bar z\bar f\in H^2$. The last condition can be rewritten 
as follows: there exists a function $g$ meromorphic and of Nevanlinna class 
in $\{|z|>1\}$ such that $g=f$ a.e. on $\mathbb{T}$ and 
$$
\frac{1}{z} 
\overline{g\Big(\frac{1}{\bar z}\Big)}\phi(z) \in H^2.
$$ 

If $\mu \in \mathbb{T} \setminus E$, then there exist an open neighborhood $V$
of $\mu$ and $\delta>0$ such that $|\phi(z)|\ge \delta$, 
$z\in \overline{\mathbb{D}} \cap V$. It follows that, for some $\vep>0$,
$$
\sup_{1< r<1+\vep} \int_{\{|e^{i\theta} - \mu|<\vep\}} |g(re^{i\theta})|^2 
d\theta <\infty.
$$
Hence, $g$ is a usual analytic continuation of $f$ through the arc 
$\{|e^{i\theta} - \mu|<\vep\}$. Moreover,
$$
|f(\mu)|^2 \le \frac{1}{\pi \vep^2} 
\int_{\{|z-\mu|<\vep\}} |f(z)|^2 dm_2(z) \le  \frac{1}{\pi \vep^2 \delta^2}  
\|\phi f\|_{H^2}^2.
$$
Here $m_2$ is the planar Lebesgue measure.
\end{proof}
\bigskip


\section{Construction of a hypercyclic rank one perturbation.}

In this section we use the spaces of analytic functions 
from the previous section to construct a rank one perturbatrion of a 
unitary operator which satisfies the conditions of Grivaux' theorem (Theorem \ref{main2}).

As above let $E\subset\mathbb{T}$ be a perfect Carleson set and  
let $\phi \in C^1(\overline{\D})$ be such that $\phi(0) \ne 0$ and  
$\{z\in\mathbb{T}:\, \phi(z) = 0\} = E$. Consider the space 
$H_*$ associated with the pair $(\phi, \phi)$. 
Then any function of the form $(z-\lambda)^{-1} \in H_*$ for 
$\lambda\in E$. It follows from the inclusion 
$\phi \in C^1(\overline{\D})$ that 
$$
\sup_{\lambda\in E, z\in\T}\Big|\frac{\phi(z)}{z-\lambda}\Big| <\infty,
$$ 
whence, by the Lebesgue dominated convergence theorem, 
\begin{equation}
\label{por1}
\Big\|\frac{\phi}{z-\mu} - \frac{\phi}{z-\lambda} \Big\|_{H^2}  \to 0 \quad\text{as}
\ \mu\to \lambda, \ \mu\in E.
\end{equation} 

Now, let $\{\lambda_j\}_{j\in J}$ be  a countable subset of $E$ without isolated points
and put 
$$
H_0 = \overline{\text{Span}}_{H_*} \bigg\{\frac{1}{z - \lambda_j}:\, j\in J\bigg\}.
$$

\begin{theorem}
\label{new}
The operator $S^*|_{H_0}$ is a hypercyclic operator which is a 
rank one perturbation of some unitary operator. Moreover, 
$\sigma(S^*|_{H_0}) = \sigma_p(S^*|_{H_0}) = \overline{E}$
where $\overline{E} = \{\bar \lambda:\, \lambda\in E\}$.
\end{theorem}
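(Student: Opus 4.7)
The plan is to apply Grivaux's criterion (Theorem \ref{main2}) to the eigenvector family $\{k_{\lambda_j}\}_{j\in J}$, with $k_\lambda(z):=(z-\lambda)^{-1}$, to invoke the functional model of \cite{kap, bar-yak} to realize $S^*|_{H_0}$ as a rank one perturbation of a unitary operator, and to pin down the spectrum using Lemma \ref{lem1}.

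A direct calculation, using $|\lambda|=1$, gives
$$
S^* k_\lambda(z) = \frac{k_\lambda(z) - k_\lambda(0)}{z} = \frac{1}{z}\Big(\frac{1}{z-\lambda} + \frac{1}{\lambda}\Big) = \frac{1}{\lambda(z-\lambda)} = \bar\lambda\, k_\lambda(z),
$$
so each $k_{\lambda_j}$ is an eigenvector of $S^*$ with unimodular eigenvalue $\bar\lambda_j$, and the $\bar\lambda_j$ are pairwise distinct. This verifies hypothesis (i) of Theorem \ref{main2}, while (ii) is built into the definition of $H_0$. For (iii), since $\{\lambda_j\}$ has no isolated points, for every $n$ and every $\vep>0$ there is $m\ne n$ with $\lambda_m$ arbitrarily close to $\lambda_n$ in $E$, and then
$$
\|k_{\lambda_n} - k_{\lambda_m}\|_{H_*} = \Big\|\frac{\phi}{z-\lambda_n} - \frac{\phi}{z-\lambda_m}\Big\|_{H^2} \to 0
$$
by (\ref{por1}). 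Theorem \ref{main2} then yields the hypercyclicity of $S^*|_{H_0}$.

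To exhibit $S^*|_{H_0}$ as a rank one perturbation of a unitary operator, I would invoke the functional model of \cite{kap, bar-yak}: $S^*$ on $H_*(\phi,\phi)$ is, by construction, a rank one perturbation of a unitary operator on $H_*$. The intuition is that $S^*f = \bar z f - f(0)\bar z$ as $L^2$ functions on $\T$, and $f\mapsto f(0)$ is a bounded functional on $H_*$ since $\phi(0)\ne 0$, so the rank one defect of $S^*$ is $f\mapsto f(0)\bar z$. Taking $\{\lambda_j\}$ dense in $E$ (compatible with having no isolated points since $E$ is perfect), (\ref{por1}) and the closedness of $H_0$ imply that $H_0$ contains every $k_\lambda$ with $\lambda\in E$, and the model decomposition restricts to $H_0$ in the form $S^*|_{H_0}=U_0+R_0$ with $U_0$ unitary on $H_0$ and $R_0$ of rank one.

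For the spectrum, the inclusion $\overline E\subset\sigma_p(S^*|_{H_0})$ is immediate from the eigenvector computation and the density argument above. For the reverse inclusion, solving $(S^*-\nu)f=g$ yields the closed-form expression $f(z)=(zg(z)+f(0))/(1-\nu z)$, with $f(0)$ determined by forcing the numerator to vanish at $z=1/\nu$; for $\nu\in\T\setminus\overline E$, Lemma \ref{lem1} ensures that $g$ extends analytically through $\bar\nu$ and that the resulting $f$ lies in $H_0$, while for $\nu$ off the unit circle one invokes the invariance of the essential spectrum under rank one perturbations together with the closed-form resolvent to exclude isolated eigenvalues. The principal obstacle is the preceding step: faithfully transferring the rank one perturbation structure from $H_*$ to $H_0$, which requires either verifying that $H_0$ reduces the unitary part of the Kapustin--Baranov--Yakubovich model, or reconstructing that unitary part intrinsically on $H_0$ using the eigenvector system $\{k_\lambda\}_{\lambda\in E}\subset H_0$ and the continuity property (\ref{por1}).
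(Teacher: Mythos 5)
Your verification of Grivaux's criterion (eigenvector computation, completeness, and property (iii) via \eqref{por1}) matches the paper. The genuine gap is the step you yourself flag as ``the principal obstacle'': exhibiting $S^*|_{H_0}$ as a rank one perturbation of a unitary operator. Saying that ``the model decomposition restricts to $H_0$'' does not work as stated: $H_0$ is invariant under $S^*$ but there is no reason it should be invariant under (let alone reduce) the unitary part of the model on $H_*$; even your heuristic defect $f\mapsto f(0)\bar z$ leaves the space, since $\bar z\notin H_*$ (because $\phi(0)\neq 0$ forces $\phi/z\notin H^2$). The paper builds the unitary intrinsically on $H_0$, which is precisely the alternative you mention but do not carry out. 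Set $H_1=\{f\in H_0:\,f(0)=0\}$ and $\widetilde H_1=\{f\in H_0:\,(zf)_\infty=0\}$. On $H_1$ the operator $S^*$ acts isometrically, because the $H_*$-norm is $\|\phi f\|_{L^2(\T)}$ and division by $z$ is unimodular on $\T$. The key lemma (Lemma \ref{lem2}), proved by a partial-fraction computation on finite sums $\sum c_j(z-\lambda_j)^{-1}$ together with the boundedness of the functionals $f\mapsto f(0)$ and $f\mapsto(zf)_\infty$, shows $S^*H_1=\widetilde H_1$, a closed subspace of codimension one in $H_0$. Writing $H_0=H_1\oplus H_2$ with $\dim H_2=1$, one defines $U=S^*$ on $H_1$ and maps $H_2$ unitarily onto $H_0\ominus S^*H_1$; then $S^*|_{H_0}-U$ is supported on the one-dimensional $H_2$, hence of rank one. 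Without this (or an equivalent) construction your claim ``$S^*|_{H_0}=U_0+R_0$'' is unsubstantiated.

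The spectral part of your proposal also has two smaller gaps. First, for $\nu\in\T\setminus\overline{E}$, Lemma \ref{lem1} only yields a unique solution $f$ of $(S^*-\nu)f=g$ in $H_*$, not in $H_0$; the paper closes this by observing that when $g$ is a finite linear combination of the $(z-\lambda_j)^{-1}$ the explicit formula shows $f$ is again such a combination, and then density of these combinations in $H_0$ together with boundedness of the resolvent gives $f\in H_0$ for all $g\in H_0$. Second, for $\nu$ off the unit circle the paper argues directly with the same closed-form solution (for $|\nu|<1$ the constant is fixed by the requirement $\phi\bar z\bar f\in H^2$, evaluated at $\bar\nu$), whereas your essential-spectrum route would still require ruling out eigenvalues off $\T$, i.e.\ showing $(z-\mu)^{-1}\notin H_0$ for $|\mu|\neq 1$, and in any case it presupposes the rank one perturbation structure whose proof is exactly the missing step above.
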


Put $H_1 = \{f\in H_0: f(0) = 0\}$ and 
$\widetilde{H}_1 = \{f\in H_0:\, (zf)_\infty = 0\}$. Then it is clear that 
$S^*$ acts isometrically on $H_1$. We have the following lemma:

\begin{lemma}
\label{lem2}
$S^* H_1 = \widetilde{H}_1$.
\end{lemma}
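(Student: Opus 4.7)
The plan is to verify the two inclusions separately. For $S^*H_1 \subset \widetilde H_1$, if $f \in H_1$ then $f(0)=0$, so $S^*f(z) = f(z)/z$ and hence $z\cdot S^*f = f$. Because every element of $H_*$ has a finite limit $\lim_{z\to\infty} zg(z)$, it in particular satisfies $\lim_{z\to\infty} g(z) = 0$; thus $(zS^*f)_\infty = \lim_{z\to\infty} f(z) = 0$. Membership $S^*f \in H_0$ comes from $S^*$-invariance of $H_0$, itself a direct consequence of the eigenvalue identity $S^* f_{\lambda_j} = \bar\lambda_j f_{\lambda_j}$ (computed from the definition of $S^*$ and $f_\lambda(0) = -1/\lambda$).

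For the reverse inclusion $\widetilde H_1 \subset S^*H_1$, I would first use that $S^*|_{H_1}$ is an isometry (already noted in the setup) so that $S^*H_1$ is automatically closed in $H_0$; it then suffices to exhibit a dense subset of $\widetilde H_1$ contained in $S^*H_1$. Fix some $\lambda_0 = \lambda_{j_0}$ from the sequence and consider, for each $j$, the vector $h_j := \lambda_j f_{\lambda_j} - \lambda_0 f_{\lambda_0}$. Since $\lambda f_\lambda(0) = -1$ for every $\lambda\in\mathbb{T}$, one has $h_j(0) = 0$, so $h_j \in H_1$; the eigenvalue relation then yields
\[
S^* h_j = \bar\lambda_j\lambda_j f_{\lambda_j} - \bar\lambda_0\lambda_0 f_{\lambda_0} = f_{\lambda_j} - f_{\lambda_0},
\]
so all these differences lie in $S^*H_1$.

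It remains to show that $\operatorname{span}\{f_{\lambda_j} - f_{\lambda_0}\}_{j \in J}$ is dense in $\widetilde H_1$, which is the one substantive point. It reduces to continuity of the functional $L(g) := (zg)_\infty$ on $H_*$. I would verify this by writing $\bar\phi g = \sum_{n \le -1} c_n(g) z^n \in H^2_-$ and passing to the limit along the analytic continuation to $|z|>1$, which gives $(zg)_\infty = c_{-1}(g)/\overline{\phi(0)}$; this is clearly bounded by $\|g\|_{H_*} = \|\bar\phi g\|_{H^2_-}$. Since $L(f_{\lambda_j}) = 1$ for every $j$ and $\{f_{\lambda_j}\}_j$ spans a dense subspace of $H_0$, any $g \in \widetilde H_1 = \ker(L|_{H_0})$ is approximated by finite combinations $g_n = \sum_j c_j^{(n)} f_{\lambda_j}$ with $\sum_j c_j^{(n)} = L(g_n) \to 0$; the modified combinations $g_n - (\sum_j c_j^{(n)}) f_{\lambda_0} = \sum_j c_j^{(n)}(f_{\lambda_j} - f_{\lambda_0})$ still converge to $g$ and lie in $\operatorname{span}\{f_{\lambda_j}-f_{\lambda_0}\}_j \subset S^*H_1$. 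The main obstacle is precisely the continuity of $L$; once that is in hand, everything else is a routine density argument combined with the closedness of $S^*H_1$.
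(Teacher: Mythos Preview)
Your proof is correct and follows essentially the same approach as the paper's: both establish $\widetilde H_1 \subset S^*H_1$ by showing that finite combinations with vanishing $(zf)_\infty$ are dense in $\widetilde H_1$ (via the same subtraction-of-$f_{\lambda_0}$ trick, using continuity of $L$) and then exhibiting explicit preimages $g = \sum \lambda_j c_j f_{\lambda_j} \in H_1$, with the isometry of $S^*|_{H_1}$ supplying closedness. Your treatment of the forward inclusion is in fact slightly cleaner than the paper's---you observe directly that $zS^*f = f$ gives $(zS^*f)_\infty = \lim_{z\to\infty} f(z) = 0$, whereas the paper computes on finite sums and then invokes continuity of $L$---but this is a minor streamlining rather than a different route.
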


\begin{proof}
If $g = \sum\limits_j \frac{c_j}{z - \lambda_j} \in H_1$ (we consider only finite sums), 
then $g(0) = -\sum\limits_j \frac{c_j}{\lambda_j} = 0$.
In this case $S^*g = \sum\limits_j \frac{c_j}{\lambda_j (z - \lambda_j)}$, 
so $(zS^*g)_\infty = 0$.
Since $(zf)_\infty$ is a continuous linear functional 
on $H_*$, we conclude that $(zf)_\infty = 0$ whenever $f\in S^* H_1$. Thus, 
$S^*H_1  \subset \widetilde{H}_1$.
\medskip

To prove the converse inclusion, let us note first that finite sums of the form 
$\sum\frac{c_j}{z-\lambda_j}$ are dense in $\widetilde{H}_1$.
Let $f\in H_0$ and $(zf)_\infty = 0$.
For any $\vep>0$ we can choose a finite linear combination 
$\sum\frac{c_j}{z-\lambda_j}$ such that $\Big\|f- \sum\frac{c_j}{z-\lambda_j}\Big\|<\vep$.
Since $h\mapsto (zh)_\infty$ is a bounded functional on $H_0$, there
exists a constant $A>0$ (independent of $f$) such that 
$$
\Big| \sum c_j\Big| = \bigg|\bigg(z\Big(\sum\frac{c_j}{z-\lambda_j}-f\Big)\bigg)_\infty\bigg| 
<A\vep.
$$
Let $\lambda_0\in E$ be fixed which is independent on $f$. 
Then it is easy to see that 
$$
\tilde f(z) =  \sum\frac{c_j}{z-\lambda_j} - \frac{\sum_j c_j}{z-\lambda_0} \in 
\widetilde{H}_1
$$
and $\|f-\tilde f\|\le \vep(1+A \|(z-\lambda_0)^{-1}\|_{H^*})$.

Now let $f(z) =  \sum\frac{c_j}{z-\lambda_j}$ (a finite sum) and $(zf)_\infty = \sum c_j 
=0$. Put $g= \sum \frac{\lambda_j c_j}{z-\lambda_j}$. Then $g(0) = 0$ (i.e., $g\in H_1$)
and 
$$
S^* g = \frac{1}{z} \bigg(\sum \frac{\lambda_j c_j}{z-\lambda_j} + \sum c_j \bigg) = f.
$$
Thus, $\widetilde{H}_1 \subset S^*H_1$.
\end{proof}
\medskip

\subsection{Proof of Theorem \ref{new}.} 
First we verify that the operator $S^*|_{H_0}$ satisfies the conditions of 
Theorem \ref{main2}. It is clear that the functions $f_j = \frac{1}{z - \lambda_j}$ 
are eigenvectors of the operator $S^*|_{H_0}$ with eigenvalues $\frac{1}{\lambda_j}$. 
Indeed, 
$$
S^*f_j (z) = 
\frac{1}{z} \Big(\frac{1}{z - \lambda_j} + \frac{1}{\lambda_j}\Big) = 
\frac{1}{\lambda_j (z - \lambda_j)}.
$$
By the definition of the space $H_0$ the set $\{f_j\}_{j\in J}$ 
is complete in $H_0$. Since the set 
$\{\lambda_j\}_{j\in J}$ has no isolated points, the continuity property (iii)
in Theorem \ref{main2} follows from \eqref{por1}.
Thus, $S^*|_{H_0}$ is hypercyclic.
\medskip

Next we need to show that $S^*|_{H_0}$  is a rank one perturbation of some
unitary operator. Let $H_1$ and $\widetilde{H}_1$ be as above. 
It is clear that 
$H_0 = H_1 \oplus H_2$, where $H_2 = \text{Lin}\{g\}$ for some $g\in H_0$ such that 
$g(0) =~1$. By Lemma \ref{lem2} $S^* H_1 = \widetilde{H}_1$ and so 
the closed subspace $S^*H_1$ has codimension
1 in $H_0$. Then we can define a unitary operator $U$ as follows: let 
$U|_{H_1} = S^*|_{H_1}$ and let $U|_{H_2}$ be an arbitrary unitary operator
of one-dimensional subspace $H_2$ onto one-dimensional subspace $H_0\ominus{S^*H_1}$.
Clearly, $S^*|_{H_0}$ will be a rank one perturbation of $U$. 
\medskip

It remains to show that $\sigma(S^*|_{H_0}) = \sigma_p(S^*|_{H_0}) = \overline{E}$. 
It is clear that $\overline{E} \subset \sigma_p(S^*|_{H_0})$. 
We show that for $\lambda \notin \overline{E}$ the equation
$S^*f- \lambda f = g$ has a unique solution $f\in H_0$ for any $g\in H_0$.  
Indeed, this equation is equivalent to 
$$
f = \frac{zg +f(0)}{1-\lambda z}. 
$$
If $\mu = \lambda^{-1} \in\mathbb{D}$ or $\mu \in \mathbb{T}\setminus E$, then
by lemma \ref{lem1} 
any function $g\in H_0$ is analytic in the neighborhood of $\mu$ and so there exists 
a unique $c\in\mathbb{C}$, namely $c=\mu g(\mu)$, such that 
$f = \frac{zg-c}{\lambda(z-\mu)}$ belongs to $\phi^{-1} H^2$. The inclusion 
$\phi \bar z \bar f \in H^2$ follows immediately from the inclusion 
$\phi \bar z \bar g \in H^2$. If $\lambda \in \mathbb{D}$, then $\phi f\in H^2$
for any $f$ of the form $f = \frac{zg-c}{\lambda(z-\mu)}$. Also,
$$
\phi \bar z \bar f = 
\phi \bar z
\frac{\bar z \bar g - \bar c}{1 - \bar \lambda \bar z}
= \frac{\phi \bar z \bar g - \bar c \phi}{z - \bar \lambda}  \in H^2
$$
if and only if $\bar c = (\phi \bar z \bar g)(\bar \lambda)/\phi(\bar\lambda)$. Thus,
we have seen that there is a unique solution $f\in H_*$ for any $g\in H_*$. 
If $g$ is a finite linear combination of $(z-\lambda_j)^{-1}$, then it is easy 
to see that $f$ also is of this form. Hence, 
$f\in H_0$ whenever $g\in H_0$. This proves Theorem \ref{new}. 
\qed
\medskip

\subsection{A link to nearly invariant subspaces and Clark measures}
In this subsection we discuss the relation of our construction with Hitt--Sarason theory
of nearly invariant subspaces. A closed subspace $X \subset H^2$ is said to be 
\textit{nearly invariant}, if for any $f \in X$ such that $f(0)=0$ we have 
$S^*f \in X$. Recall that by the Beurling theorem any $S^*$-invariant subspace
is of the form $K_\Theta = H^2\ominus\Theta H^2$ for some inner function $\Theta$ 
(see, e.g., \cite{nik1}). The structure of nearly invariant subspaces was described 
by D.~Hitt \cite{hitt} and D.~Sarason \cite{sar2}. 

\begin{theorem}[Hitt, Sarason, \cite{hitt, sar2}]
\label{hit}
If $X$ is nearly invariant, then $X = g K_\Theta$, where $\Theta$ is an 
inner function, $g$ is outer, and the mapping $M_g: f \mapsto gf$ is an isometry on $K_\Theta$.
In particular, $M_g$ maps $K_\Theta$ isometrically onto $X$.
\end{theorem}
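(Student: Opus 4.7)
My approach follows the classical strategy of Hitt and Sarason: isolate a distinguished ``extremal'' unit vector $g\in X$, show that division by $g$ induces an isometric isomorphism from $X$ onto a closed $S^*$-invariant subspace of $H^2$, and then invoke Beurling's theorem.

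First I would analyze the subspace $X_0 = \{f\in X : f(0) = 0\}$, the kernel in $X$ of the bounded evaluation functional at $0$. Assuming $X\neq\{0\}$, the subspace $X_0$ cannot equal $X$: if it did, then $X\subseteq zH^2$, and near invariance would give $S^*X\subseteq X\subseteq zH^2$, hence $X\subseteq z^2H^2$, and by iteration $X\subseteq\bigcap_{n\ge 0} z^n H^2 = \{0\}$, a contradiction. Hence $X_0$ has codimension one in $X$, and I would take $g$ to be the unique unit vector in $X$ orthogonal to $X_0$ with $g(0)>0$; every $f\in X$ then decomposes uniquely as $f = (f(0)/g(0))g + f_0$ with $f_0\in X_0$.

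Using near invariance, I would define an operator $T:X\to X$ by $Tf = S^*(f - (f(0)/g(0))g)$, well-defined because the argument lies in $X_0\subseteq zH^2$. Since $S^*$ is isometric on $zH^2$, one gets $\|Tf\|^2 = \|f\|^2 - |f(0)/g(0)|^2$, so $T$ is a contraction whose defect measures the ``mass at $0$''. The heart of the argument is to produce, for every $f\in X$, a function $h\in H^2$ with $f=gh$ and $\|h\|_{H^2} = \|f\|_{X}$; one constructs $h$ by setting $h(0) = f(0)/g(0)$ and iterating $T$ to read off successive Taylor coefficients. Packaging this, I would form
$$
K = \{h\in H^2 : gh\in X \text{ and } \|gh\|_{H^2} = \|h\|_{H^2}\},
$$
verify $1\in K$ (since $g\in X$ and $\|g\|=1$), and check the intertwining identity $T(gh) = g\cdot S^*h$ for $h\in K$, which both implies the $S^*$-invariance of $K$ and shows that $M_g:K\to X$ is an isometric bijection.

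The main obstacle is proving that $g$ is outer, which is needed both to guarantee uniqueness of the factorization $f = gh$ and to properly identify $K$. The standard route is the extremal characterization of $g$ as the (essentially) unique maximizer of $|f(0)|$ on the unit ball of $X$: any nontrivial inner factor in $g$ would, using near invariance, permit modifying $g$ inside $X$ so as to strictly increase the modulus at $0$, contradicting extremality. Once $g$ is known to be outer, Beurling's theorem applied to the closed $S^*$-invariant subspace $K\subseteq H^2$ yields $K = K_\Theta = H^2\ominus\Theta H^2$ for some inner function $\Theta$, and by construction $M_g$ is an isometry from $K_\Theta$ onto $X$, which completes the proof.
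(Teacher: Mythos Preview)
The paper does not actually prove this theorem; it is quoted as a classical result of Hitt and Sarason with a citation to \cite{hitt, sar2}, and is then used as a black box in the discussion of nearly invariant subspaces. So there is no ``paper's own proof'' to compare against.

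That said, your outline is the standard Hitt--Sarason argument and is essentially correct. A few remarks on points you left implicit: the iteration producing the Taylor coefficients of $h$ from $f$ via $T$ requires one to check that the resulting power series actually lies in $H^2$ with the right norm, which follows from the contraction inequality $\|T^nf\|^2 = \|f\|^2 - \sum_{k<n}|h_k|^2$ (Parseval). The outerness argument is the most delicate step; your sketch (a nontrivial inner factor of $g$ would allow one to increase $|g(0)|$ while staying in $X$) is the correct idea, but carrying it out rigorously uses near invariance in a slightly subtle way and is where Sarason's refinement of Hitt's original argument lives. Finally, note that the result as stated in the paper allows the degenerate case $\Theta\equiv 0$, i.e.\ $K_\Theta = H^2$, which occurs when $K$ is all of $H^2$; your Beurling step should accommodate this.
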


Let us show that
$$
\varphi H_0 = \varphi \overline{\text{Span}}_{H_*} 
\bigg\{\frac{1}{z - \lambda}:\, \lambda \in E \bigg\}.
$$
is a nearly-invariant subspace of $H^2$. Indeed, 
for finite sums $\varphi\sum \frac{c_j}{z - \lambda_j}$ 
(which are dense in $\varphi H_0$) we have 
$$
\sum \frac{c_j}{z - \lambda_j}\bigg|_{z=0} = -\frac{c_j}{\lambda_j} = 0,
$$ 
and so 
$$
\frac{1}{z} \bigg(\varphi \sum \frac{c_j}{z - \lambda_j}\bigg) = 
\frac{1}{z} \bigg(\varphi \sum \Big(\frac{c_j}{z - \lambda_j} + \frac{c_j}{\lambda_j}\Big)
\bigg) = \varphi \bigg(\sum \frac{c_j}{\lambda_j (z - \lambda_j)}\bigg) \in \varphi H_0.
$$
Therefore, by the Hitt--Sarason theorem $\varphi H_0 = g K_\Theta$ for some $g$ 
and $\Theta$.

If $f \in H_0$, then $\frac{\varphi f}{g} \in K_\theta$ and
$$
\|f\|_{H_0} = \|\varphi f\|_{H^2} = \Big\|\frac{\varphi f}{g}\Big\|_{H^2}
$$
and so the mapping $M_{\phi/g}: 
f\mapsto \phi f/g$ is a unitary operator from $H_0$ onto $K_\Theta$.
It is clear that operators $S^*|_{H_1}$ (where $H_1 = \{f\in H_0: f(0) = 0\}$)
and $S^*|_{\{f\in K_\Theta:\, f(0)=0\}}$ are unitarily equivalent. 
Since $S^*|_{H_1}$ coincides on $H_1$ with the unitary operator $U$
constructed in Theorem \ref{new}, we conclude that $\tilde U = M_{\phi/g}^{-1}
U M_{\phi/g}$ is a unitary operator on $K_\Theta$ which is a rank one perturbation of 
$S^*|_{K_\Theta}$. 

Unitary rank one perturbations of $S^*|_{K_\Theta}$ were described by D.N. Clark 
in a seminal paper \cite{cl}. This description involves a construction of a 
special family of measures. Let $\alpha\in\mathbb{T}$. Then the function 
$\frac{\alpha+\theta}{\alpha-\theta}$ has positive real part in $\mathbb{D}$, and so
there exists a finite positive measure
$\mu_\alpha$ on $\tz$ (singular with respect to the Lebesgue measure) such that
$$
\rea \frac{\alpha+\theta(z)}{\alpha-\theta(z)}=
\frac{1}{\pi} \int_\mathbb{T} \frac{1-|z|^2}{|\tau - z|^2}\,
d \mu_\alpha(\tau), \qquad z\in\mathbb{D}.
$$
Clark showed that the set of unitary rank one perturbations of $S^*|_{K_\Theta}$
can be parametrized by $\alpha\in\mathbb{T}$ and their spectral measures
are exactly the measures $\mu_\alpha$. 

\begin{corollary}
Let $E\subset \mathbb{T}$ be a perfect Carleson set 
and let $\phi$ be the corresponding smooth function with boundary 
zero set $E$. Then the space $\phi H_0$ is a nearly invariant subspace. 
Moreover, if $\Theta$ is the associated inner function from the Hitt--Sarason theorem. 
then the spectral measure of the unitary operator $U$ which has a hypercyclic rank one 
perturbation is the Clark measure $\mu_\alpha$ for $\Theta$. 
\end{corollary}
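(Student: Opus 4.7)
The plan is to assemble the corollary from two ingredients that are essentially already displayed in the text preceding the statement: the nearly invariance of $\phi H_0$ inside $H^2$, and the identification of the unitary $U$ from Theorem~\ref{new} with a Clark-type rank one perturbation on a model space $K_\Theta$.

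For the first claim I would extend the finite-sum computation given in the text to all of $\phi H_0$ by a density and continuity argument. By the definition of the norm on $H_*$, the map $M_\phi : H_0 \to H^2$ is isometric, so $\phi H_0$ is a closed subspace of $H^2$, and the evaluation functional $h \mapsto h(0)$ on $H_0$ is continuous (it factors through $M_\phi$, evaluation at $0$ in $H^2$, and division by $\phi(0)\ne 0$). Given $f \in \phi H_0$ with $f(0)=0$, write $f = \phi h$ with $h \in H_0$; then $h(0)=0$, i.e.\ $h \in H_1$. Approximate $h$ by finite sums $h_n = \sum_j c_{n,j}/(z-\lambda_j)$, adjusting each by a small multiple of a fixed element such as $(z-\lambda_0)^{-1}$ so that $h_n(0)=0$ exactly while $h_n \to h$ in $H_0$. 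The displayed identity then gives $S^*(\phi h_n) \in \phi H_0$, and continuity of $S^*$ on $H^2$ together with closedness of $\phi H_0$ delivers $S^* f \in \phi H_0$.

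For the second claim, Theorem~\ref{hit} supplies an inner function $\Theta$, an outer function $g$, and an isometry $M_g : K_\Theta \to \phi H_0$. Composing with $M_\phi^{-1}$ yields a unitary $V := M_{\phi/g}: H_0 \to K_\Theta$. Because $(\phi/g)(0)$ is a nonzero constant, $V$ maps $H_1 = \{h \in H_0 : h(0)=0\}$ onto $\{k \in K_\Theta : k(0)=0\}$, and it intertwines $S^*$ on this subspace: for $h \in H_1$, both $V(S^*h)(z)$ and $S^*(Vh)(z)$ reduce to $(\phi/g)(z)\,h(z)/z$. Since $U|_{H_1}=S^*|_{H_1}$, the conjugate $\tilde U := V U V^{-1}$ is a unitary operator on $K_\Theta$ that agrees with $S^*|_{K_\Theta}$ on the codimension-one subspace $\{k\in K_\Theta: k(0)=0\}$, hence is a unitary rank one perturbation of $S^*|_{K_\Theta}$. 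Clark's theorem identifies $\tilde U$ with some member $U_\alpha$ of the Clark family, whose spectral measure is $\mu_\alpha$; unitary equivalence via $V$ transfers this spectral measure back to $U$.

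The only mildly delicate point is the intertwining identity, which amounts to the observation that multiplication by $\phi/g$ commutes with $S^*$ precisely on functions vanishing at the origin. Everything else is bookkeeping once the functional model for $H_0$, the Hitt--Sarason decomposition of $\phi H_0$, and Clark's parametrisation of unitary rank one perturbations of $S^*|_{K_\Theta}$ are lined up.
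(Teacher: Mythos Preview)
Your proposal is correct and follows essentially the same route as the paper's own discussion preceding the corollary: verify nearly invariance via the finite-sum identity plus density, transfer to $K_\Theta$ by the unitary $M_{\phi/g}$ obtained from Hitt--Sarason, and then invoke Clark's classification. You merely spell out in more detail the density adjustment and the intertwining $V S^*|_{H_1} = S^*|_{\{k(0)=0\}} V$ (and tacitly correct the paper's conjugation formula to $\tilde U = V U V^{-1}$), but the argument is the same.
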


It would be interesting to use this relation to characterize 
(at least a subclass of) unitary operators $U$ which have a hypercyclic rank
one perturbation. The difficulty here is that the proof of the Hitt--Sarason theorem
is nonconstructive and, even if we have explicit knowledge of 
$\phi$ and $H_0$, it is difficult to translate it into the information
about $\Theta$. In particular, it would be interesting to know whether
the corresponding Clark measure $\mu_\alpha$ can have a nontrivial 
singular continuous (i.e., without atoms) part. 
\bigskip

\end{document}